\documentclass[12pt]{amsart}

\usepackage{latexsym,amssymb,amsmath,amsthm,amscd,graphicx, xcolor, enumerate}

\usepackage{mathrsfs}
\usepackage{times}
\usepackage{microtype}
\usepackage{setspace}
\usepackage[margin=1.2in]{geometry}

\usepackage{cite}

\usepackage[colorlinks=true, pdfstartview=FitV, linkcolor=blue,
citecolor=blue, urlcolor=blue]{hyperref}

\numberwithin{equation}{section}
\allowdisplaybreaks

\newtheorem{thm}{Theorem}[section]
\newtheorem{lemma}[thm]{Lemma}
\newtheorem{corollary}[thm]{Corollary}

\theoremstyle{definition}

\newtheorem{remark}[thm]{Remark}
\newtheorem{definition}[thm]{Definition}
\newtheorem{example}[thm]{Example}

\theoremstyle{remark}

\def\XXint#1#2#3{{\setbox0=\hbox{$#1{#2#3}{\int}$}
     \vcenter{\hbox{$#2#3$}}\kern-.5\wd0}}

\begin{document}

\title[DENSITY OF NEURAL NETWORK CLASSES ON COMPACT SUBSETS OF TVS]{DENSITY OF NEURAL NETWORK CLASSES ON COMPACT SUBSETS OF TOPOLOGICAL VECTOR SPACES}

\author[Mohammad javad Baghbanbashi]{Mohammad javad Baghbanbashi}

\address{  Mohammad javad Baghbanbashi\\
Department of Mathematics\\
    Institute for Advanced Studies in Basic Sciences (IASBS) \\
    Zanjan, 45137-66731 \\
    Iran}
	\email{baghban.mj@iasbs.ac.ir}

\author[A. Ghorbanalizadeh]{Arash Ghorbanalizadeh}

\address{Arash Ghorbanalizadeh\\
	    Department of Mathematics\\
    Institute for Advanced Studies in Basic Sciences (IASBS) \\
    Zanjan, 45137-66731 \\
    Iran}
   
    \email{ gurbanalizade@gmail.com; ghorbanalizadeh@iasbs.ac.ir}	

\email{}

\begin{abstract}
We prove density results for neural-network classes on compact sets \(K\subset X\),
where \(X\) is a topological vector space whose continuous dual \(X^*\) separates
points. Let \(\Psi:\mathbb R\to\mathbb R\) be a continuous squashing function. We show
that the class
\[
\Sigma_X(\Psi)
=
\left\{
\sum_{j=1}^{N}\omega_j\Psi(f_j(x)+b_j):
N\in\mathbb N,\ \omega_j,b_j\in\mathbb R,\ f_j\in X^*
\right\}
\]
is dense in \(C(K)\) with respect to the uniform norm. As a consequence, if \(\mu\) is a
Radon probability measure supported on \(K\), then \(\Sigma_X(\Psi)\) is dense in
\(L^p(K,\mu)\) for every \(1\le p<\infty\). 
\end{abstract}


\maketitle
\vspace{-0.6cm}

\section{Introduction}

The universal approximation theorem is one of the foundational results in the mathematical
theory of neural networks. In its classical form, it asserts that finite linear combinations
of ridge functions of the form
\[
x\mapsto \Psi(a\cdot x+b), \qquad a\in\mathbb R^n,\ b\in\mathbb R,
\]
are dense in suitable spaces of functions on compact subsets of $\mathbb R^n$, provided
the activation function $\Psi$ satisfies appropriate non-degeneracy assumptions. The
classical works of Cybenko \cite{Cybenko1989} and Hornik, Stinchcombe, and White
\cite{Hornik1989} established this phenomenon by using arguments based on the
Hahn--Banach theorem and the Riesz representation theorem. Further developments and
refinements were obtained by Hornik \cite{Hornik1991} and by Leshno--Lin--Pinkus--
Schocken \cite{Leshno1993}. The survey of Pinkus \cite{Pinkus1999} gives a
comprehensive approximation-theoretic account of these developments.

More recently, approximation by neural networks in variable Lebesgue spaces was studied
in \cite{CapelOcariz2020}, while related density results were revisited from the viewpoint
of Fourier analysis and approximation theory in \cite{IzukiSawano2025}. In the Euclidean
setting, the finite-dimensional structure of $\mathbb R^n$ plays an essential role.
Tools such as coordinates, polynomials, Fourier transforms, and approximate identities
are available, and they are often used to prove that a finite signed measure is zero
whenever it integrates all functions in the relevant neural-network class to zero.

The aim of this paper is to establish an analogue of such density results on compact
sets $K\subset X$, where $X$ is a topological vector space whose continuous dual $X^*$
separates points. Since Euclidean tools such as Fourier transforms and convolution are
not available in this general setting, we use a different method based on finite-dimensional
pushforward measures and the Stone--Weierstrass theorem. In this approach, the role
played by Euclidean coordinates and polynomials is replaced by the algebra generated by
the restrictions of elements of $X^*$ to $K$.

Universal approximation on topological vector spaces was recently considered by
Ismailov \cite{Ismailov2024}, who studied feedforward neural networks with inputs from
a topological vector space. A further related direction is the extension of such
approximation results to networks with outputs in locally convex spaces
\cite{Saini2026}. Our approach is different from Ismailov's: we give a direct proof based
on finite-dimensional pushforward measures associated with continuous linear functionals
and the Stone--Weierstrass theorem. This proof also yields, in a transparent way, density
in \(L^p(K,\mu)\) for Radon probability measures supported on compact subsets.

For a continuous squashing function $\Psi$, we consider the class
\[
\Sigma_X(\Psi)
=
\left\{
\sum_{j=1}^{N}\omega_j\Psi(f_j(x)+b_j):
N\in\mathbb N,\ \omega_j,b_j\in\mathbb R,\ f_j\in X^*
\right\}.
\]
We prove that $\Sigma_X(\Psi)$ is dense in $C(K)$ and, as a consequence, dense in
$L^p(K,\mu)$ for every $1\le p<\infty$ and every Radon probability measure $\mu$
supported on $K$.

The paper is organized as follows. In Section~2, we collect the preliminary tools used
throughout the paper. Section~3 is devoted to the main density results in $C(K)$ and
$L^p(K,\mu)$. In Section~4, we discuss examples and remarks, including the Euclidean,
Banach-space, and locally convex cases.

\section{Preliminaries}
In this section we recall the basic tools used in the sequel. We first state the representation and approximation results needed for the duality argument, and then introduce the neural-network class and the algebra generated by continuous linear functionals.

\begin{thm}[Riesz--Markov--Kakutani Representation Theorem {\cite[Theorem~7.17]{Folland1999}}]\label{thm:rmk}
Let $K$ be a compact Hausdorff space, and let $C(K)$ denote the Banach space of
real-valued continuous functions on $K$ equipped with the uniform norm. Then, for every
continuous linear functional $I\in C(K)^*$, there exists a unique finite signed regular
Borel measure $\mu$ on $K$ such that
\[
I(f)=\int_K f(x)\,d\mu(x), \qquad f\in C(K).
\]
Moreover,
\[
\|I\|_{C(K)^*}=|\mu|(K),
\]
where $|\mu|$ denotes the total variation measure of $\mu$.
\end{thm}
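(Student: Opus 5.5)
Since this is the Riesz--Markov--Kakutani theorem, a classical result quoted from Folland, the plan is to reduce to the positive case and then build the measure by the standard outer-measure construction.

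\emph{Step 1: reduction to positive functionals.} Given $I\in C(K)^*$, set, for $f\ge 0$,
\[
I^+(f)=\sup\{I(g): g\in C(K),\ 0\le g\le f\}.
\]
This is finite, since $I^+(f)\le \|I\|\,\|f\|_\infty$. We check that $I^+$ is additive and positively homogeneous on the positive cone. Superadditivity is immediate. For subadditivity, given $0\le g\le f_1+f_2$, we write $g=g_1+g_2$ with $g_1=\min(g,f_1)$ and $g_2=g-g_1$, so that $0\le g_i\le f_i$. We then extend $I^+$ linearly to all of $C(K)$ and define $I^-=I^+-I$. Both $I^\pm$ are positive linear functionals, and $I^+(1)+I^-(1)\le\|I\|$ by the usual approximation argument.

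\emph{Step 2: positive functionals and regular measures.} For a positive functional $J$ on $C(K)$ we build the measure directly. For open $U$, set
\[
\mu(U)=\sup\{J(f): f\prec U\},
\]
where $f\prec U$ means $0\le f\le 1$ and $\operatorname{supp} f\subset U$. For arbitrary $E$, set $\mu^*(E)=\inf\{\mu(U):U\supset E \text{ open}\}$. We then carry out four checks.
\begin{enumerate}[(i)]
\item $\mu^*$ is an outer measure. Countable subadditivity on open sets uses partitions of unity subordinate to finite subcovers of compact supports; this is where Urysohn's lemma is needed, and it is available because $K$ is compact Hausdorff, hence normal.
\item Every open set is Carath\'eodory measurable, so the restriction of $\mu^*$ to the Borel sets is a measure.
\item For compact $C$, $\mu(C)=\inf\{J(f): f\ge \chi_C\}$. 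This gives inner regularity on open sets and outer regularity by definition.
\item $J(f)=\int f\,d\mu$. We show it for $0\le f\le 1$ by slicing $f$ into the level pieces $f_j=\min(\max(f-\tfrac{j-1}{N},0),\tfrac1N)$ and comparing $J(f_j)$ with $\mu$ of the superlevel sets. Then we let $N\to\infty$.
\end{enumerate}

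\emph{Step 3: assembling the measure and the norm identity.} Put $\mu=\mu_{I^+}-\mu_{I^-}$; this is a finite signed regular Borel measure representing $I$. For the norm, $|\int f\,d\mu|\le \|f\|_\infty|\mu|(K)$ gives $\|I\|\le|\mu|(K)$. For the reverse inequality, take the Hahn decomposition $K=P\cup N$. By regularity and Urysohn's lemma we approximate $\chi_P-\chi_N$ in $L^1(|\mu|)$ by continuous functions with values in $[-1,1]$, which yields $\|I\|\ge|\mu|(K)$.

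\emph{Step 4: uniqueness.} If two regular signed measures give the same integrals, their difference $\nu$ annihilates $C(K)$. By the norm identity just proved, applied to the zero functional, $|\nu|(K)=0$, so $\nu=0$.

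The main obstacle is Step 2(iv), together with the countable subadditivity in (i). Both rely on careful Urysohn and partition-of-unity constructions, and the level-set slicing must be done carefully to keep the error below $\mu(K)/N$. Everything else is bookkeeping.
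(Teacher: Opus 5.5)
Your proposal is correct and is essentially the standard proof of the result, which the paper quotes from Folland's Theorem~7.17 without proof: you decompose $I=I^+-I^-$ into positive functionals, build a regular measure for each positive part by the outer-measure construction, and get the norm identity by approximating $\chi_P-\chi_N$ in $L^1(|\mu|)$ with continuous functions taking values in $[-1,1]$. One point is worth stating explicitly: your Step~3 lower bound uses only the regularity of $|\mu|$, not how $\mu$ was constructed, and that is exactly what allows you to apply it in Step~4 to the difference of two arbitrary regular representing measures.
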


\begin{thm}[Lusin's Theorem {\cite[Section~7.1]{Bogachev2007}}]\label{thm:lusin}
Let $(X,\mathcal B,\mu)$ be a Radon measure space, and let $f:X\to\mathbb R$ be a
measurable function. Then, for every $\varepsilon>0$, there exists a compact set
$F\subset X$ such that
\[
\mu(X\setminus F)<\varepsilon
\]
and $f|_F$ is continuous.
\end{thm}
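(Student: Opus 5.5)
The plan is to prove Theorem~\ref{thm:lusin} directly from inner regularity, in the setting where it is used later: $X$ is a Hausdorff space and $\mu$ is a \emph{finite} Radon measure, so that $\mu(B)=\sup\{\mu(C): C\subset B,\ C \text{ compact}\}$ for every Borel set $B$. This is Bogachev's convention for Radon measures and covers the Radon probability measures of Section~3. The finiteness hypothesis matters: for infinite measures the conclusion $\mu(X\setminus F)<\varepsilon$ with $F$ compact can fail outright, so I restrict to finite $\mu$. The idea is to handle countably many ``test sets'' at once, each within an error $\varepsilon 2^{-n-1}$, and intersect the resulting compacts.

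\textbf{Step 1 (compact core).} Using inner regularity applied to $X$ itself, I choose a compact $K_0\subset X$ with $\mu(X\setminus K_0)<\varepsilon/2$.

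\textbf{Step 2 (sandwiching preimages).} I enumerate the open intervals with rational endpoints as $(U_n)_{n\ge1}$; they form a countable base of $\mathbb R$. Let $A_n=f^{-1}(U_n)$, which is measurable. By inner regularity, applied separately to $A_n$ and to $X\setminus A_n$, I pick compacts $C_n\subset A_n$ and $D_n\subset X\setminus A_n$ such that
\[
\mu\bigl(X\setminus (C_n\cup D_n)\bigr)<\varepsilon\, 2^{-n-1}.
\]
This step uses the finiteness of $\mu$, since it subtracts measures.

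\textbf{Step 3 (the compact $F$).} Set $F=K_0\cap\bigcap_{n}(C_n\cup D_n)$. In a Hausdorff space compact sets are closed, so $F$ is a closed subset of the compact set $K_0$ and hence compact. Summing the errors from Steps~1 and~2 gives $\mu(X\setminus F)<\varepsilon/2+\sum_n\varepsilon 2^{-n-1}=\varepsilon$.

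\textbf{Step 4 (continuity of $f|_F$).} For each $n$ the sets $C_n$ and $D_n$ are disjoint, so
\[
(f|_F)^{-1}(U_n)=A_n\cap F=F\setminus D_n .
\]
The right-hand side is relatively open in $F$ because $D_n$ is closed. Since the $U_n$ form a base of $\mathbb R$, the preimage of every open set is a union of such sets, so $f|_F$ is continuous.

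The only delicate point is the hypothesis bookkeeping rather than the estimate itself. One needs inner regularity by compacts for \emph{all} Borel sets, not just open ones, and one needs finiteness of $\mu$. Both hold in the paper's application, where $\mu$ is a Radon probability measure on a compact Hausdorff $K$. If one only had inner regularity on open sets, I would first use finiteness to pass to outer regularity by open sets and then to inner regularity of Borel sets by closed, hence compact, subsets of $K_0$.
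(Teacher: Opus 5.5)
Your proof is correct. The paper gives no proof of its own and only cites \cite[Section~7.1]{Bogachev2007}, whose argument is essentially yours: inner regularity applied to $f^{-1}(U_n)$ and its complement for a countable base $(U_n)$, then intersecting the compacts $C_n\cup D_n$. (Your $K_0$ is actually superfluous, since $\bigcap_n(C_n\cup D_n)$ is already a closed subset of the compact set $C_1\cup D_1$.) Your insistence on finiteness of $\mu$ is well placed: under the paper's Definition~\ref{def:radon}, Lebesgue measure on $\mathbb R$ is Radon, yet no compact $F$ satisfies $\mu(\mathbb R\setminus F)<\varepsilon$. So the statement as written needs $\mu$ finite, which is exactly the case used in Lemma~\ref{lem:density-C-Lp}.
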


\begin{thm}[Tietze Extension Theorem {\cite[Section~35]{Munkres2000}}]\label{thm:tietze}
Let $X$ be a normal topological space, let $A\subset X$ be closed, and let
$f:A\to\mathbb R$ be continuous. Then there exists a continuous function
$g:X\to\mathbb R$ such that $g=f$ on $A$ and
\[
\sup_{x\in X}|g(x)|=\sup_{x\in A}|f(x)|.
\]
\end{thm}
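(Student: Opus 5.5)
This is the classical Tietze theorem, and I would prove it in the standard way: reduce to Urysohn's lemma and build the extension as a uniformly convergent series. Throughout, normality of $X$ is used only through Urysohn's lemma. That lemma says: for disjoint closed $E,F\subset X$ and reals $a<b$, there is a continuous $u:X\to[a,b]$ with $u=a$ on $E$ and $u=b$ on $F$.

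\emph{Bounded case.} Put $M=\sup_A|f|$. If $M=0$, take $g\equiv0$; otherwise rescale so that $M=1$. The key step is the following one-step approximation. Suppose $h:A\to\mathbb R$ is continuous with $|h|\le c$. The sets
\[
A_+=\{h\ge c/3\},\qquad A_-=\{h\le -c/3\}
\]
are disjoint and closed in $A$, hence closed in $X$ because $A$ is closed. Urysohn's lemma then gives a continuous $u:X\to[-c/3,c/3]$ with $u=-c/3$ on $A_-$ and $u=c/3$ on $A_+$. Checking the three regions $A_+$, $A_-$ and $A\setminus(A_+\cup A_-)$ gives $|h-u|\le 2c/3$ on $A$.

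Now iterate. Start with $h_0=f$ and $c_0=1$. At stage $n$ apply the step to $h_{n-1}$ to get $u_n$ with $|u_n|\le \tfrac13(2/3)^{n-1}$, and set $h_n=h_{n-1}-u_n|_A$, so that $|h_n|\le(2/3)^n$. Define $g=\sum_n u_n$. By the Weierstrass M-test the series converges uniformly, so $g$ is continuous on $X$. Moreover
\[
|g|\le\sum_{n\ge1}\tfrac13(2/3)^{n-1}=1,
\]
and on $A$ we have $f-\sum_{k\le n}u_k=h_n\to0$, so $g=f$ on $A$. Since $g$ extends $f$, $\sup_X|g|\ge\sup_A|f|=1$, and therefore equality holds.

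\emph{Unbounded case.} Here both suprema are $+\infty$ once we have any continuous extension, so only existence matters. Compose with a homeomorphism $\phi:\mathbb R\to(-1,1)$, for example $\phi(t)=t/(1+|t|)$. Apply the bounded case to $\phi\circ f$ to get a continuous $G:X\to[-1,1]$ with $G=\phi\circ f$ on $A$.

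The set $D=\{|G|=1\}$ is closed and disjoint from $A$. Urysohn's lemma gives $v:X\to[0,1]$ with $v=1$ on $A$ and $v=0$ on $D$. Then $vG$ maps $X$ into $(-1,1)$ and agrees with $\phi\circ f$ on $A$. Finally take $g=\phi^{-1}\circ(vG)$, which is a continuous extension of $f$.

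The only delicate points are the bookkeeping in the geometric series and, in the unbounded case, correcting the points where the bounded extension reaches $\pm1$. The Urysohn trick above handles the latter directly.
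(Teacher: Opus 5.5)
Your proof is correct and is essentially the standard argument of Munkres, Section~35: Urysohn's lemma for the one-step $2/3$-approximation, a uniformly convergent geometric series, and the $\{|G|=1\}$ cutoff for unbounded $f$. That is the reference the paper cites in place of giving its own proof, and your bound $|g|\le 1$ together with the trivial reverse inequality correctly gives the stated norm equality $\sup_X|g|=\sup_A|f|$.
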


\begin{thm}[Stone--Weierstrass Theorem {\cite[Section~7.3]{Rudin1976}}]\label{thm:stone-weierstrass}
Let $K$ be a compact Hausdorff space and let $\mathcal A\subset C(K)$ be a subalgebra
containing the constant functions. Assume that $\mathcal A$ separates points of $K$, that
is, for every distinct $x,y\in K$ there exists $f\in\mathcal A$ such that $f(x)\ne f(y)$.
Then $\mathcal A$ is dense in $C(K)$ with respect to the uniform norm.
\end{thm}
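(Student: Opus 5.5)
We would prove the theorem by the classical lattice argument, working throughout with the uniform closure $\overline{\mathcal A}$ of $\mathcal A$. The aim is to show $\overline{\mathcal A}=C(K)$. The first step is routine: $\overline{\mathcal A}$ is again a subalgebra of $C(K)$ containing the constants and separating points, because addition, multiplication and scalar multiplication are jointly continuous for the uniform norm. So we may assume $\mathcal A$ is closed and show it is all of $C(K)$.

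The second step, which we expect to be the main obstacle, is to show that a closed such $\mathcal A$ is a lattice: if $f\in\mathcal A$ then $|f|\in\mathcal A$.
\begin{itemize}
\item Put $M=\|f\|_\infty$. It suffices to find polynomials $p_n$ with $p_n(0)=0$ and $p_n(t)\to|t|$ uniformly on $[-M,M]$. Then $p_n\circ f\in\mathcal A$, since there is no constant term and in any case constants lie in $\mathcal A$, and $p_n\circ f\to|f|$ uniformly.
\item To avoid circularity we will not invoke the classical Weierstrass theorem. Instead we build the polynomials directly: define $q_0=0$ and $q_{n+1}(s)=q_n(s)+\tfrac12\bigl(s-q_n(s)^2\bigr)$ on $[0,1]$.
\item By induction, $0\le q_n(s)\le q_{n+1}(s)\le\sqrt s$, using the factorisation $\sqrt s-q_{n+1}=(\sqrt s-q_n)\bigl(1-\tfrac12(\sqrt s+q_n)\bigr)$. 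Dini's theorem, or the explicit bound $\sqrt s-q_n(s)\le 2/n$, then gives $q_n\to\sqrt s$ uniformly on $[0,1]$.
\item Substituting $s=t^2/M^2$ and multiplying by $M$ yields the required $p_n$.
\end{itemize}
Once absolute values are available, the identities $\max(f,g)=\tfrac12(f+g+|f-g|)$ and $\min(f,g)=\tfrac12(f+g-|f-g|)$ show that $\mathcal A$ is closed under finite maxima and minima.

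The third step is two-point interpolation. Fix distinct $x,y\in K$ and reals $a,b$. Choose $h\in\mathcal A$ with $h(x)\ne h(y)$ and set
\[
g=a+(b-a)\frac{h-h(x)}{h(y)-h(x)}.
\]
Because constants lie in $\mathcal A$, we have $g\in\mathcal A$, with $g(x)=a$ and $g(y)=b$. If $x=y$, a constant function does the job.

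The final step is a double compactness argument. Fix $F\in C(K)$ and $\varepsilon>0$.
\begin{itemize}
\item For each $x,y$, take $g_{x,y}\in\mathcal A$ agreeing with $F$ at $x$ and $y$.
\item Fix $x$. For each $y$, the set $\{g_{x,y}<F+\varepsilon\}$ is an open neighbourhood of $y$. Finitely many of these cover $K$, and the minimum $g_x$ of the corresponding $g_{x,y}$ lies in $\mathcal A$ and satisfies $g_x<F+\varepsilon$ on $K$ and $g_x(x)=F(x)$.
\item Next, the sets $\{g_x>F-\varepsilon\}$ form an open cover of $K$. Taking the maximum $g$ of a finite subfamily gives $g\in\mathcal A$ with $\|g-F\|_\infty<\varepsilon$.
\end{itemize}
Since $\mathcal A$ is closed, $F\in\mathcal A$, which completes the argument. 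The Hausdorff hypothesis is not needed beyond the setting itself; only compactness, the constants and point separation are used.
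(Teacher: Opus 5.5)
Your proof is correct. The paper gives no proof of its own and only cites Rudin, whose proof follows your scheme: pass to the uniform closure, show $|f|\in\overline{\mathcal A}$, obtain $\max$ and $\min$, interpolate at two points, and finish with the double compactness argument. The one real difference is that you build the polynomials approximating $|t|$ directly from the iteration $q_{n+1}=q_n+\tfrac12(s-q_n^2)$ instead of taking them from the Weierstrass theorem, which makes the argument self-contained. You should also state the trivial case $M=0$ separately: there $f=0$, so $|f|\in\mathcal A$ at once, and the rescaling $s=t^2/M^2$ is undefined.
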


\begin{definition}[Squashing function {\cite{Cybenko1989,Hornik1989}}]\label{def:squashing}
A function $\Psi:\mathbb R\to[0,1]$ is called a squashing function if $\Psi$ is
non-decreasing and
\[
\lim_{t\to\infty}\Psi(t)=1,\qquad \lim_{t\to-\infty}\Psi(t)=0.
\]
\end{definition}

\begin{definition}[Radon measure {\cite[Section~7.1]{Bogachev2007}}]\label{def:radon}
Let $X$ be a Hausdorff topological space. A measure $\mu$ on $X$ is called a Radon
measure if $\mu(K)<\infty$ for every compact set $K\subset X$ and, for every Borel set
$E\subset X$,
\[
\mu(E)=\sup\{\mu(F):F\subset E,\ F\text{ compact}\}.
\]
\end{definition}

\begin{definition}\label{def:sigma}
Let $X$ be a topological vector space and let $\Psi:\mathbb R\to\mathbb R$ be a
continuous squashing function. The class $\Sigma_X(\Psi)$ consists of all functions
$F:X\to\mathbb R$ of the form
\[
F(x)=\sum_{j=1}^N \omega_j\Psi(f_j(x)+b_j), \qquad x\in X,
\]
where $N\in\mathbb N$, $\omega_j,b_j\in\mathbb R$, and $f_j\in X^*$.
\end{definition}

\begin{definition}\label{def:algebra}
Let $X$ be a topological vector space and let $K\subset X$ be compact. We denote by
$\mathcal A(X^*)$ the algebra generated by the restrictions of continuous linear
functionals to $K$, that is, the smallest subalgebra of $C(K)$ containing the constant
functions and the set
\[
\{f|_K:f\in X^*\}.
\]
\end{definition}

\begin{lemma}[{\cite[Lemma~3.2]{IzukiSawano2025}}]\label{lem:hsw}
Let $\Psi$ be a squashing function. If a finite signed Borel measure $\nu$ on
$\mathbb R^n$ satisfies
\[
\int_{\mathbb R^n}\Psi(w\cdot x+b)\,d\nu(x)=0
\]
for all $w\in\mathbb R^n$ and $b\in\mathbb R$, then $\nu=0$.
\end{lemma}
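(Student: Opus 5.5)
The plan is to reduce to one dimension by projecting \(\nu\) along each direction \(w\), and to kill the one-dimensional measures using the step-function limits of \(\Psi\). After that, uniqueness of the Fourier--Stieltjes transform recovers \(\nu=0\).

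\emph{Step 1: reduction to \(\mathbb R\).} Since \(\Psi\) is monotone, it is Borel measurable and bounded by \(1\), so every integral below exists. Fix \(w\in\mathbb R^n\) and let \(\nu_w\) be the pushforward of \(\nu\) under \(x\mapsto w\cdot x\). This is a finite signed Borel measure on \(\mathbb R\). Applying the hypothesis with \(\lambda w\) in place of \(w\) and using the change-of-variables formula gives
\[
\int_{\mathbb R}\Psi(\lambda t+b)\,d\nu_w(t)=0,\qquad \lambda,b\in\mathbb R .
\]

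\emph{Step 2: step-function limits.} Fix \(\theta,\varphi\in\mathbb R\) and take \(b=-\lambda\theta+\varphi\), so the integrand is \(\Psi(\lambda(t-\theta)+\varphi)\). As \(\lambda\to\infty\) it converges pointwise to \(1\) for \(t>\theta\), to \(0\) for \(t<\theta\), and equals \(\Psi(\varphi)\) at \(t=\theta\). The integrand is bounded by \(1\) and \(|\nu_w|\) is finite, so dominated convergence (applied to the Jordan parts) yields
\[
\nu_w((\theta,\infty))+\Psi(\varphi)\,\nu_w(\{\theta\})=0\qquad\text{for all }\varphi\in\mathbb R .
\]
Letting \(\varphi\to-\infty\) and then \(\varphi\to+\infty\) gives \(\nu_w((\theta,\infty))=0\) and \(\nu_w(\{\theta\})=0\). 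Hence \(\nu_w([\theta,\infty))=0\) for every \(\theta\). Letting \(\theta\to-\infty\) gives \(\nu_w(\mathbb R)=0\).

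\emph{Step 3: \(\nu_w=0\).} Write \(\nu_w=\nu_w^+-\nu_w^-\). The finite positive measures \(\nu_w^\pm\) agree on the \(\pi\)-system of half-lines \((\theta,\infty)\) and also on \(\mathbb R\). By Dynkin's \(\pi\)-\(\lambda\) theorem they agree on all Borel sets, so \(\nu_w=0\).

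\emph{Step 4: conclusion via Fourier transforms.} For every \(w\in\mathbb R^n\),
\[
\hat\nu(w)=\int_{\mathbb R^n}e^{-iw\cdot x}\,d\nu(x)=\int_{\mathbb R}e^{-is}\,d\nu_w(s)=0 .
\]
By the uniqueness theorem for Fourier--Stieltjes transforms of finite signed measures on \(\mathbb R^n\), it follows that \(\nu=0\). This is the Cramér--Wold idea: a measure is determined by its one-dimensional projections.

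The main point requiring care is Step 2. One must choose the shifts \(b\) depending on \(\lambda\) so that the limit is an indicator of a half-line, and one must handle the atom at \(\theta\), whose weight \(\Psi(\varphi)\) is not a priori \(0\) or \(1\). Varying \(\varphi\) and using both limits of the squashing function resolves this. The other steps are standard measure theory.
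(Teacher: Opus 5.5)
The paper gives no proof of this lemma; it simply imports it from Izuki--Noi--Sawano--Tanaka. So there is no internal argument to match, and your proposal supplies one. It is correct and complete.

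In substance it is Cybenko's classical argument that a bounded measurable sigmoid is discriminatory:
\begin{itemize}
\item sending $\lambda\to\infty$ in $\Psi(\lambda(t-\theta)+\varphi)$ produces half-line indicators;
\item the free shift $\varphi$ disposes of the atom (hyperplane) term;
\item Fourier--Stieltjes uniqueness finishes.
\end{itemize}
You phrase it through the one-dimensional pushforwards $\nu_w$. Where Cybenko extends from interval indicators to all bounded measurable functions by density of simple functions, you use Dynkin's $\pi$--$\lambda$ theorem instead.

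Compared with the bare citation, your proof makes the paper self-contained and shows exactly what is used about $\Psi$: boundedness, Borel measurability (the only role of monotonicity), and the two limits at $\pm\infty$. Continuity is never needed.

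It also shows that the only non-elementary ingredient is Step~4, and the paper's application can do without it. In the proof of Theorem~\ref{thm:density-CK}, your Steps~1--3 applied to a single $f\in X^*$ (using $\lambda f\in X^*$) already give $\mu\circ f^{-1}=0$. Hence $\int_K f^k\,d\mu=0$ for all $k\ge 0$. The polarization identity $k!\,f_1\cdots f_k=\sum_{S\subseteq\{1,\dots,k\}}(-1)^{k-|S|}\bigl(\sum_{i\in S}f_i\bigr)^k$, with each $\sum_{i\in S}f_i\in X^*$, then shows that $\mu$ annihilates $\mathcal A(X^*)$. So the $m$-dimensional lemma, and with it all Fourier analysis, could be dropped from the main proof. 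That would fit the paper's stated aim of avoiding Euclidean Fourier tools.
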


\begin{lemma}[Pushforward measure {\cite[Theorem~3.6.1]{Bogachev2007}}]\label{lem:pushforward}
Let $K$ be a compact Hausdorff space, let $\mu$ be a finite signed Borel measure on $K$,
and let $F:K\to\mathbb R^m$ be continuous. Define the pushforward measure
$\nu=\mu\circ F^{-1}$ by
\[
\nu(A)=\mu(F^{-1}(A)),\qquad A\in\mathcal B(\mathbb R^m).
\]
Then $\nu$ is a finite signed Borel measure on $\mathbb R^m$, and for every bounded Borel
measurable function $G:\mathbb R^m\to\mathbb R$,
\[
\int_{\mathbb R^m}G(t)\,d\nu(t)=\int_K G(F(x))\,d\mu(x).
\]
\end{lemma}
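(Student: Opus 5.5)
The plan is to reduce everything to positive measures through the Jordan decomposition and then run the standard ``indicators, simple functions, uniform limits'' argument. First, $F:K\to\mathbb R^m$ is continuous, so for every open $U\subset\mathbb R^m$ the set $F^{-1}(U)$ is open in $K$. The family $\{A\subset\mathbb R^m: F^{-1}(A)\in\mathcal B(K)\}$ is a $\sigma$-algebra, because preimages commute with complements and countable unions. Since it contains the open sets, it contains $\mathcal B(\mathbb R^m)$. Hence $\nu(A)=\mu(F^{-1}(A))$ is well defined for every Borel set $A$.

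Next I would check that $\nu$ is a finite signed measure. Clearly $\nu(\emptyset)=\mu(\emptyset)=0$. If $(A_k)$ are pairwise disjoint Borel sets, then the sets $F^{-1}(A_k)$ are pairwise disjoint and their union is $F^{-1}(\bigcup_k A_k)$. Countable additivity of $\nu$, including absolute convergence of the series, therefore follows from that of $\mu$.

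Finiteness is cleanest via the Jordan decomposition $\mu=\mu^+-\mu^-$, where $\mu^\pm$ are finite positive Borel measures on $K$. Their pushforwards $\nu^\pm=\mu^\pm\circ F^{-1}$ are finite positive measures with $\nu^\pm(\mathbb R^m)=\mu^\pm(K)<\infty$, and $\nu=\nu^+-\nu^-$. In particular $|\nu|(\mathbb R^m)\le|\mu|(K)$.

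For the change-of-variables formula, both sides are linear in the measure and $\nu=\nu^+-\nu^-$. So it suffices to prove
\[
\int_{\mathbb R^m} G\,d\lambda\circ F^{-1}=\int_K G\circ F\,d\lambda
\]
for a finite positive Borel measure $\lambda$ on $K$. For $G=\chi_A$ with $A$ Borel we have $\chi_A\circ F=\chi_{F^{-1}(A)}$, so the identity is exactly the definition of the pushforward. By linearity it then holds for all Borel simple functions.

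Given a bounded Borel $G$, I would choose Borel simple functions $s_n$ with $\sup|s_n-G|\to0$; these exist by the standard dyadic construction, since $G$ is bounded. Then $s_n\circ F\to G\circ F$ uniformly on $K$, and $G\circ F$ is Borel on $K$ by the first step. Because both measures are finite, uniform convergence lets us pass to the limit on both sides: the errors are bounded by $\sup|s_n-G|\cdot\lambda(K)$.

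There is no real obstacle here. The only point requiring care is the signed setting, and it is handled by passing to $\mu^\pm$ rather than manipulating $\nu$ directly. This avoids any question of whether $(\mu\circ F^{-1})^\pm$ coincides with $\mu^\pm\circ F^{-1}$, which is not needed.
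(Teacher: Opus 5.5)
Your proposal is correct and follows the same route as the paper. Borel measurability of $F^{-1}(A)$ comes from continuity, finiteness from $|\nu|(\mathbb R^m)\le|\mu|(K)$, and the change-of-variables formula from passing through indicators, simple functions and bounded Borel functions. You make explicit what the paper leaves as ``standard approximation'': the reduction to $\mu^\pm$ (correctly avoiding any claim that $(\mu\circ F^{-1})^\pm=\mu^\pm\circ F^{-1}$) and the uniform approximation of bounded $G$ by simple functions.
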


\begin{proof}
Since $F$ is continuous, $F^{-1}(A)$ is Borel in $K$ for every Borel set
$A\subset\mathbb R^m$. Thus $\nu$ is a Borel measure. Finiteness follows from
$|\nu|(\mathbb R^m)\le |\mu|(K)<\infty$. The change-of-variables formula follows directly
from the definition of the pushforward measure, first for characteristic functions, then
for simple functions, and finally for bounded Borel functions by standard approximation.
\end{proof}

\begin{remark}\label{rem:separating-dual}
The assumption that $X^*$ separates points is automatic for normed spaces and, more
generally, for Hausdorff locally convex spaces. It is not automatic for an arbitrary
topological vector space. Thus the main theorem is naturally formulated for those
topological vector spaces whose continuous dual is sufficiently rich.
\end{remark}

\section{Main results}
We now prove the main density results. The first result establishes uniform density on compact subsets of a topological vector space whose continuous dual separates points. The proof is based on annihilating measures, finite-dimensional pushforward measures, and the Stone--Weierstrass theorem. The corresponding \(L^p\)-density result is then obtained by combining this uniform approximation theorem with the density of \(C(K)\) in \(L^p(K,\mu)\).

\begin{thm}[Density in $C(K)$]\label{thm:density-CK}
Let $X$ be a topological vector space such that $X^*$ separates points of $X$, let
$K\subset X$ be compact, and let $\Psi:\mathbb R\to\mathbb R$ be a continuous squashing
function. Then $\Sigma_X(\Psi)$ is dense in $C(K)$ with respect to the uniform norm.
\end{thm}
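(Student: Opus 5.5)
We argue by duality. Suppose $\Sigma_X(\Psi)$, restricted to $K$, is not dense in $C(K)$. Note first that $K$ is Hausdorff: since $X^*$ separates points, the weak topology induced by $X^*$ is Hausdorff and coarser than the original topology. Moreover each $\Psi(f(x)+b)$ with $f\in X^*$ restricts to a continuous function on $K$, so the closure $\overline{\Sigma_X(\Psi)|_K}$ is a proper closed subspace of $C(K)$. By the Hahn--Banach theorem there is a nonzero $I\in C(K)^*$ vanishing on this subspace. By Theorem~\ref{thm:rmk}, $I$ is given by a nonzero finite signed regular Borel measure $\mu$ on $K$ with
\[
\int_K \Psi(f(x)+b)\,d\mu(x)=0\qquad\text{for all } f\in X^*,\ b\in\mathbb R .
\]
It then suffices to show $\mu=0$, which gives the contradiction.

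Fix finitely many $f_1,\dots,f_m\in X^*$ and let $F=(f_1,\dots,f_m)|_K:K\to\mathbb R^m$, which is continuous. Let $\nu=\mu\circ F^{-1}$ be the pushforward of Lemma~\ref{lem:pushforward}. For $w\in\mathbb R^m$ and $b\in\mathbb R$, the functional $g=\sum_i w_i f_i$ lies in $X^*$. The function $t\mapsto\Psi(w\cdot t+b)$ is bounded and Borel, so
\[
\int_{\mathbb R^m}\Psi(w\cdot t+b)\,d\nu(t)=\int_K\Psi(g(x)+b)\,d\mu(x)=0 .
\]
Lemma~\ref{lem:hsw} then gives $\nu=0$. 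Consequently, for every bounded Borel $G$ on $\mathbb R^m$ we get $\int_K G(F(x))\,d\mu(x)=\int G\,d\nu=0$. In particular this holds for any polynomial $P$ in $m$ variables, after multiplying it by a cutoff equal to $1$ on the compact set $F(K)$ to make it bounded. Hence $\int_K P(f_1,\dots,f_m)\,d\mu=0$.

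Every element of $\mathcal A(X^*)$ from Definition~\ref{def:algebra} has this form: a polynomial, including constants, in finitely many restrictions $f_i|_K$. Therefore $\mu$ annihilates $\mathcal A(X^*)$. This algebra contains the constants and separates the points of $K$, because $X^*$ separates points of $X$. By Theorem~\ref{thm:stone-weierstrass} it is uniformly dense in $C(K)$, so $\int_K h\,d\mu=0$ for all $h\in C(K)$. The uniqueness part of Theorem~\ref{thm:rmk} then forces $\mu=0$, the desired contradiction.

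The main obstacle is the bookkeeping in the pushforward step, for two reasons. First, the identity $\int G\,d\nu=\int G\circ F\,d\mu$ has to be applied to the possibly unbounded polynomial $P$, which is why $P$ is truncated away from the compact image $F(K)$. Second, we need $\nu$ to be a genuine finite signed Borel measure on $\mathbb R^m$ so that Lemma~\ref{lem:hsw} applies; this is exactly what Lemma~\ref{lem:pushforward} provides. With those checks in place, the remaining steps are standard.
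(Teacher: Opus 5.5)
Your proposal is correct and follows the paper's proof step for step: Hahn--Banach plus Theorem~\ref{thm:rmk} give an annihilating measure $\mu$, the pushforward under $(f_1,\dots,f_m)$ together with Lemma~\ref{lem:hsw} forces $\nu=0$, and Stone--Weierstrass on $\mathcal A(X^*)$ with Riesz uniqueness then gives $\mu=0$. You also make explicit two points the paper uses only implicitly, and both are handled correctly: $K$ is Hausdorff because the weak topology $\sigma(X,X^*)$ is, and polynomials must be cut off outside the compact set $F(K)$ so that the bounded-function change of variables in Lemma~\ref{lem:pushforward} applies.
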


\begin{proof}
We argue by contradiction. Suppose that $\Sigma_X(\Psi)$ is not dense in $C(K)$.
Then its closure is a proper closed linear subspace of $C(K)$. By the Hahn--Banach
theorem, there exists a nonzero continuous linear functional $I\in C(K)^*$ such that
\[
I(h)=0 \qquad \text{for all } h\in \Sigma_X(\Psi).
\]
By Theorem \ref{thm:rmk}, there exists a nonzero finite
signed regular Borel measure $\mu$ on $K$ such that
\[
I(g)=\int_K g(x)\,d\mu(x), \qquad g\in C(K).
\]
Consequently,
\[
\int_K \Psi(f(x)+b)\,d\mu(x)=0
\]
for every $f\in X^*$ and every $b\in\mathbb R$, because each function
$x\mapsto \Psi(f(x)+b)$ belongs to $\Sigma_X(\Psi)$. Here we use the continuity of
$\Psi$ to ensure that these functions are elements of $C(K)$.

We now pass from one linear functional to finitely many linear functionals. Let
$f_1,\ldots,f_m\in X^*$ and define the continuous map
\[
F:K\to\mathbb R^m,\qquad
F(x)=(f_1(x),\ldots,f_m(x)).
\]
Let $\nu=\mu\circ F^{-1}$ be the pushforward measure on $\mathbb R^m$. Since $K$ is
compact and $F$ is continuous, the measure $\nu$ is a finite signed Borel measure with
compact support contained in $F(K)$.

For every $w=(w_1,\ldots,w_m)\in\mathbb R^m$ and every $b\in\mathbb R$, we have
\[
w\cdot F(x)+b
=
\sum_{j=1}^{m}w_j f_j(x)+b.
\]
Since $\sum_{j=1}^{m}w_j f_j\in X^*$, the annihilation property above gives
\[
\int_{\mathbb R^m}\Psi(w\cdot t+b)\,d\nu(t)
=
\int_K \Psi(w\cdot F(x)+b)\,d\mu(x)
=0.
\]
By Lemma~\ref{lem:hsw}, applied in $\mathbb R^m$, it follows that $\nu=0$.

Therefore, for every polynomial $P$ on $\mathbb R^m$,
\[
\int_K P(f_1(x),\ldots,f_m(x))\,d\mu(x)
=
\int_{\mathbb R^m}P(t)\,d\nu(t)
=0.
\]
This shows that $\mu$ annihilates every finite product and every finite linear combination
of restrictions of elements of $X^*$ to $K$. Equivalently, $\mu$ annihilates the algebra
$\mathcal A(X^*)$ generated by these restrictions.

Since $X^*$ separates points of $X$, it also separates points of $K$. Hence the algebra
$\mathcal A(X^*)$ separates points of $K$ and contains the constant functions. By Theorem \ref{thm:stone-weierstrass},
\[
\overline{\mathcal A(X^*)}^{\|\cdot\|_\infty}=C(K).
\]
On the other hand, the map
\[
g\mapsto \int_K g\,d\mu
\]
is a continuous linear functional on $C(K)$, because $\mu$ is finite. Since this functional
vanishes on the dense subalgebra $\mathcal A(X^*)$, it must vanish on all of $C(K)$.
Thus
\[
\int_K g\,d\mu=0
\qquad
\text{for every } g\in C(K).
\]
By the uniqueness part of Theorem \ref{thm:rmk}, we obtain
$\mu=0$. This contradicts the fact that $I$ was nonzero. Hence $\Sigma_X(\Psi)$ is dense
in $C(K)$.
\end{proof}

\begin{corollary}[Banach-space case]\label{cor:banach-CK}
Let $X$ be a Banach space, let $K\subset X$ be compact, and let
$\Psi:\mathbb R\to\mathbb R$ be a continuous squashing function. Then
$\Sigma_X(\Psi)$ is dense in $C(K)$ with respect to the uniform norm.
\end{corollary}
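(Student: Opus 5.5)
The plan is to deduce the corollary directly from Theorem~\ref{thm:density-CK}. It then suffices to check that a Banach space $X$ satisfies the theorem's hypotheses: $X$ is a topological vector space whose continuous dual $X^*$ separates points of $X$. Every Banach space is a normed space, and with its norm topology it is a topological vector space. Addition and scalar multiplication are continuous by the triangle inequality and the homogeneity of the norm.

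The one substantive step is point separation, as noted in Remark~\ref{rem:separating-dual}. Take distinct $x,y\in X$ and set $z=x-y\neq 0$. On the one-dimensional subspace $\mathbb R z$, define the linear functional $\lambda(tz)=t\|z\|$. Its norm on that subspace is $1$, since $|\lambda(tz)|=\|tz\|$. The norm form of the Hahn--Banach theorem extends $\lambda$ to some $f\in X^*$ with $\|f\|=1$. Then
\[
f(x)-f(y)=f(z)=\|z\|\neq 0,
\]
so $f$ separates $x$ and $y$. Completeness is not used here, so the argument holds for any normed space.

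With both hypotheses verified, we apply Theorem~\ref{thm:density-CK} to the compact set $K\subset X$ and the continuous squashing function $\Psi$. This gives that $\Sigma_X(\Psi)$ is dense in $C(K)$ in the uniform norm. The compact set $K$ is Hausdorff as a subspace of a metric space, which is what Theorem~\ref{thm:density-CK} implicitly needs when it uses Theorem~\ref{thm:rmk} and Theorem~\ref{thm:stone-weierstrass}.

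No step is a real obstacle. The only non-formal ingredient is the Hahn--Banach extension producing a norming functional, and that is standard.
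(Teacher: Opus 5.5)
Your proposal is correct and follows the paper's proof. Both reduce the corollary to Theorem~\ref{thm:density-CK} by using the Hahn--Banach theorem to produce $f\in X^*$ with $f(x-y)\neq 0$ for $x\neq y$; your version makes the extension step explicit through the norming functional on $\mathbb R z$, and you correctly note that completeness is never used and that $K$ is Hausdorff.
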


\begin{proof}
Since $X$ is a Banach space, the continuous dual $X^*$ separates points of $X$
by the Hahn--Banach theorem; see, for example, \cite{Rudin1991}. Indeed, if
$x,y\in X$ and $x\neq y$, then $x-y\neq 0$. By the Hahn--Banach theorem,
there exists $f\in X^*$ such that
\[
f(x-y)\neq 0.
\]
Hence $f(x)\neq f(y)$. Therefore the assumptions of Theorem~\ref{thm:density-CK}
are satisfied, and the conclusion follows.
\end{proof}

\begin{lemma}[Density of $C(K)$ in $L^p(K,\mu)$]\label{lem:density-C-Lp}
Let $X$ be a Hausdorff topological space, let $K\subset X$ be compact, and let $\mu$ be a
finite Radon measure on $K$. Then, for $1\le p<\infty$, the space $C(K)$ is dense in
$L^p(K,\mu)$.
\end{lemma}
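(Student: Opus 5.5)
We follow the standard route: reduce to simple functions, then approximate indicators of Borel sets by continuous functions, using Lusin's theorem (Theorem~\ref{thm:lusin}) and the Tietze extension theorem (Theorem~\ref{thm:tietze}). First, since $\mu$ is finite, simple functions $s=\sum_{i=1}^n c_i\chi_{E_i}$ with $E_i\subset K$ Borel are dense in $L^p(K,\mu)$ for $1\le p<\infty$. Indeed, for $g\in L^p$ one truncates to $g_M=\max(-M,\min(g,M))$, so that $\|g-g_M\|_p\to0$ by dominated convergence, and then approximates the bounded function $g_M$ uniformly by simple functions. Since $\mu(K)<\infty$, uniform convergence implies $L^p$ convergence. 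It therefore suffices to approximate a bounded Borel function $h$ on $K$, with $|h|\le M$, in $L^p$ by continuous functions.

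Fix such an $h$ and $\varepsilon>0$. The restriction of $\mu$ to $K$ is a finite Radon measure on the compact Hausdorff space $K$, so Theorem~\ref{thm:lusin} gives a compact set $F\subset K$ with $\mu(K\setminus F)<\varepsilon$ and $h|_F$ continuous. A compact Hausdorff space is normal, and $F$ is closed in $K$. Theorem~\ref{thm:tietze} then yields $g\in C(K)$ with $g=h$ on $F$ and $\sup_K|g|=\sup_F|h|\le M$. Consequently
\[
\|h-g\|_{L^p(K,\mu)}^p=\int_{K\setminus F}|h-g|^p\,d\mu\le (2M)^p\,\mu(K\setminus F)<(2M)^p\varepsilon ,
\]
which can be made arbitrarily small. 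Combining this with the first step proves density of $C(K)$ in $L^p(K,\mu)$. (Continuous functions on $K$ are bounded, so they do belong to $L^p$, since $\mu$ is finite.)

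The main point requiring care is the applicability of Lusin's and Tietze's theorems in this setting. The Hausdorff assumption on $X$ makes $K$ compact Hausdorff, hence normal, and compact subsets of $K$ are closed; this is what Tietze needs. For Lusin, we need the restriction of $\mu$ to $K$ to be Radon in the sense of Definition~\ref{def:radon}. This holds because Borel subsets of $K$ are Borel in $X$ and compact subsets of $K$ are compact in $X$, so inner regularity is inherited. The norm-preserving version of Tietze stated in Theorem~\ref{thm:tietze} is what gives the uniform bound $2M$ on $|h-g|$ off $F$; without it the error on $K\setminus F$ could not be controlled.
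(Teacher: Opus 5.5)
Your proposal is correct and is essentially the paper's proof. You truncate to a bounded function, apply Lusin's theorem to obtain a compact $F$ on which it is continuous, extend from $F$ with the norm-preserving Tietze theorem, and bound the $L^p$ error by $(2M)^p\mu(K\setminus F)$. The intermediate reduction to simple functions is unnecessary, since Lusin applies directly to the bounded truncation and you never use the simple functions, but it does no harm.
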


\begin{proof}
Let $f\in L^p(K,\mu)$ and $\varepsilon>0$. Define
\[
f_N(x)=
\begin{cases}
f(x), & |f(x)|\le N,\\
N\operatorname{sgn}(f(x)), & |f(x)|>N.
\end{cases}
\]
Then $|f_N|\le |f|$ and $f_N\to f$ pointwise. By the dominated convergence theorem,
$\|f-f_N\|_p\to0$. Hence, for sufficiently large $N$,
\[
\|f-f_N\|_p<\frac{\varepsilon}{2}.
\]

The function $f_N$ is bounded and measurable. By Theorem \ref{thm:lusin}, for a suitable compact
set $F\subset K$, the restriction $f_N|_F$ is continuous and $\mu(K\setminus F)$ is small.
Since $F$ is closed in the compact Hausdorff space $K$, the space $K$ is normal. By Theorem \ref{thm:tietze}, there exists $g\in C(K)$ such that $g=f_N$ on $F$ and
$\|g\|_\infty\le\|f_N\|_\infty$.

Choosing $F$ so that
\[
(2\|f_N\|_\infty)^p\mu(K\setminus F)<\left(\frac{\varepsilon}{2}\right)^p,
\]
we obtain
\[
\|f_N-g\|_p^p
=
\int_{K\setminus F}|f_N(x)-g(x)|^p\,d\mu(x)
\le (2\|f_N\|_\infty)^p\mu(K\setminus F)
<\left(\frac{\varepsilon}{2}\right)^p.
\]
Therefore,
\[
\|f-g\|_p\le\|f-f_N\|_p+\|f_N-g\|_p<\varepsilon.
\]
This proves the density of $C(K)$ in $L^p(K,\mu)$.
\end{proof}

\begin{thm}[Density in $L^p(K,\mu)$]\label{thm:density-Lp}
Let $X$ be a topological vector space such that $X^*$ separates points of $X$, let
$\mu$ be a Radon probability measure with compact support $K\subset X$, let
$1\le p<\infty$, and let $\Psi:\mathbb R\to\mathbb R$ be a continuous squashing function.
Then $\Sigma_X(\Psi)$ is dense in $L^p(K,\mu)$.
\end{thm}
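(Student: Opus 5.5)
The plan is to combine Theorem~\ref{thm:density-CK} with Lemma~\ref{lem:density-C-Lp} through a two-step $\varepsilon/2$ argument. The only new ingredient is the elementary comparison between the uniform norm and the $L^p$ norm for a finite measure.

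Let $h\in L^p(K,\mu)$ and $\varepsilon>0$. Since $K$ is compact in a topological vector space whose dual separates points, $K$ is Hausdorff: two distinct points are separated by some $f\in X^*$, and preimages under $f$ of disjoint open intervals give disjoint neighbourhoods. The restriction of $\mu$ to $K$ is a finite Radon measure on $K$, since $\mu(K)=1$ and compact subsets of $K$ are compact in $X$. Therefore Lemma~\ref{lem:density-C-Lp} applies and yields $g\in C(K)$ with
\[
\|h-g\|_{L^p(K,\mu)}<\varepsilon/2 .
\]

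Next, Theorem~\ref{thm:density-CK} gives $F\in\Sigma_X(\Psi)$ with $\sup_{x\in K}|F(x)-g(x)|<\varepsilon/2$. Each element of $\Sigma_X(\Psi)$ is continuous on $K$, because $\Psi$ is continuous and each $f_j$ is continuous. Hence $F|_K$ is a bounded Borel function and defines an element of $L^p(K,\mu)$. Because $\mu$ is a probability measure,
\[
\|F-g\|_{L^p(K,\mu)}\le \mu(K)^{1/p}\,\|F-g\|_{\infty}<\varepsilon/2 .
\]
The triangle inequality then gives $\|h-F\|_{L^p}<\varepsilon$.

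No step here is a real obstacle. The one point needing care is that the hypotheses of Lemma~\ref{lem:density-C-Lp} hold for $K$ with the restricted measure. This comes down to $K$ being Hausdorff, which follows from the point-separating dual as shown above, and $\mu|_K$ being Radon. If $K$ is understood as the support of $\mu$, we also need $\mu(X\setminus K)=0$, so that $L^p(K,\mu)$ is the right space. I would state this explicitly as part of the meaning of ``supported on $K$''.
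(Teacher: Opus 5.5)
Your proposal is correct and follows the paper's proof: Lemma~\ref{lem:density-C-Lp} gives $L^p$-density of $C(K)$, Theorem~\ref{thm:density-CK} gives uniform density of $\Sigma_X(\Psi)$ in $C(K)$, and the bound $\|u\|_{L^p}\le\mu(K)^{1/p}\|u\|_\infty$ combines the two. Your explicit checks that $K$ is Hausdorff (from the point-separating dual) and that $\mu|_K$ is Radon supply hypotheses the paper uses without comment, and $\mu(X\setminus K)=0$ is automatic for a Radon measure and its support, so you do not need to impose it as a convention.
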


\begin{proof}
By Theorem~\ref{thm:density-CK}, $\Sigma_X(\Psi)$ is dense in $C(K)$ with respect to the
uniform norm. By Lemma~\ref{lem:density-C-Lp}, $C(K)$ is dense in $L^p(K,\mu)$. Since
\[
\|u\|_{L^p(K,\mu)}\le \mu(K)^{1/p}\|u\|_\infty \qquad (u\in C(K)),
\]
uniform density in $C(K)$ implies $L^p$-density in $C(K)$. Combining these two density
statements gives
\[
\overline{\Sigma_X(\Psi)}^{\|\cdot\|_{L^p(K,\mu)}}=L^p(K,\mu).
\]

\end{proof}

\begin{corollary}[Banach-space $L^p$-density]\label{cor:banach-Lp}
Let $X$ be a Banach space, let $\mu$ be a Radon probability measure with compact support
$K\subset X$, and let $1\le p<\infty$. If $\Psi:\mathbb R\to\mathbb R$ is a continuous
squashing function, then $\Sigma_X(\Psi)$ is dense in $L^p(K,\mu)$.
\end{corollary}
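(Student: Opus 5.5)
The plan is to reduce Corollary~\ref{cor:banach-Lp} to Theorem~\ref{thm:density-Lp}, in the same way Corollary~\ref{cor:banach-CK} was obtained from Theorem~\ref{thm:density-CK}. Apart from the assumption that $X^*$ separates points, every hypothesis of Theorem~\ref{thm:density-Lp} already appears in the corollary:
\begin{itemize}
\item $X$ is a Banach space, hence a topological vector space;
\item $\mu$ is a Radon probability measure with compact support $K\subset X$;
\item $1\le p<\infty$;
\item $\Psi$ is a continuous squashing function.
\end{itemize}

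The only thing left to check is that $X^*$ separates points of $X$. The argument is the one used in Corollary~\ref{cor:banach-CK}. Let $x\neq y$ in $X$. Then $x-y\neq 0$. By the Hahn--Banach theorem there is $f\in X^*$ with $f(x-y)=\|x-y\|\neq 0$. Linearity then gives $f(x)\neq f(y)$. Remark~\ref{rem:separating-dual} already records this fact for normed spaces.

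With separation established, Theorem~\ref{thm:density-Lp} applies directly and gives density of $\Sigma_X(\Psi)$ in $L^p(K,\mu)$.

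No step here is a real obstacle. The one point to watch is that a norm topology is Hausdorff, so $K$ is a compact Hausdorff space and the Radon-measure framework of Lemma~\ref{lem:density-C-Lp} (Lusin's theorem, Tietze extension on the normal space $K$) is available. Theorem~\ref{thm:density-Lp} already relies on this, and it holds automatically in the Banach setting.
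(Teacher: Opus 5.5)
Your proposal is correct and matches the paper's proof: the paper also uses the Hahn--Banach theorem to show that $X^*$ separates points of the Banach space $X$, and then applies Theorem~\ref{thm:density-Lp} directly. Your extra observation that the norm topology is Hausdorff, so that $K$ is compact Hausdorff, is a harmless check that the paper leaves implicit.
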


\begin{proof}
By the Hahn--Banach theorem, the continuous dual $X^*$ separates points of $X$. Hence the conclusion follows directly from
Theorem~\ref{thm:density-Lp}.
\end{proof}

\section{Examples and remarks}

We finish by recording several classes of spaces to which the preceding theorems apply. These examples show that the separating-dual assumption is natural in many standard settings, including Euclidean spaces, Banach spaces, and Hausdorff locally convex spaces.

\begin{example}[The Euclidean case]\label{ex:euclidean}
Let $X=\mathbb R^n$ with its usual topology, and let $K\subset\mathbb R^n$ be compact.
Then
\[
X^*=(\mathbb R^n)^*
\]
separates points of $\mathbb R^n$. Indeed, if $x,y\in\mathbb R^n$ and $x\neq y$, then
there exists $a\in\mathbb R^n$ such that
\[
a\cdot x\neq a\cdot y.
\]
Therefore, for every continuous squashing function $\Psi$, the class
\[
\Sigma_{\mathbb R^n}(\Psi)
=
\left\{
\sum_{j=1}^N \omega_j\Psi(a_j\cdot x+b_j):
N\in\mathbb N,\ \omega_j,b_j\in\mathbb R,\ a_j\in\mathbb R^n
\right\}
\]
is dense in $C(K)$. Consequently, if $\mu$ is a Radon probability measure supported on
$K$, then $\Sigma_{\mathbb R^n}(\Psi)$ is dense in $L^p(K,\mu)$ for every
$1\le p<\infty$.

This recovers the usual ridge-function form of the universal approximation theorem on
compact subsets of $\mathbb R^n$; see, for example,
\cite{Cybenko1989,Hornik1989,Pinkus1999}.
\end{example}

\begin{example}[Banach spaces]\label{ex:banach}
Let $X$ be a Banach space. Then $X^*$ separates points of $X$ by the Hahn--Banach
theorem. Hence, for every compact set $K\subset X$ and every continuous squashing
function $\Psi$, the class $\Sigma_X(\Psi)$ is dense in $C(K)$. In particular, the result
applies to standard Banach spaces such as
\[
\ell^p,\quad L^p(\Omega)\quad (1\le p<\infty),\qquad C([0,1]).
\]
provided that the approximation is considered on compact subsets of these spaces.
\end{example}

\begin{example}[Locally convex spaces]\label{ex:locally-convex}
Let $X$ be a Hausdorff locally convex topological vector space. Then continuous linear
functionals separate points of $X$; see, for example, \cite{Rudin1991}. Indeed, local
convexity and the Hausdorff property imply that points can be separated from closed
convex sets by continuous linear functionals. Therefore the main theorem applies to compact subsets of Hausdorff locally
convex spaces.

This includes, for instance, Fr\'echet spaces and many spaces of smooth or rapidly
decreasing functions equipped with their usual locally convex topologies. Thus the result
is not restricted to normed or finite-dimensional spaces.
\end{example}

\begin{remark}
The compactness assumption on $K$ is essential for the present formulation, since the
main density result is proved in the uniform norm on $C(K)$ and uses the
Stone--Weierstrass theorem. The passage to $L^p(K,\mu)$ then follows from the density
of $C(K)$ in $L^p(K,\mu)$ for Radon measures.
\end{remark}


\begin{thebibliography}{99}

\bibitem{Bogachev2007}
V.~I. Bogachev,
\textit{Measure Theory}, Vols. I--II,
Springer, Berlin--Heidelberg, 2007.

\bibitem{CapelOcariz2020}
\'A. Capel and J. Oc\'ariz,
Approximation with neural networks in variable Lebesgue spaces,
arXiv:2007.04166, 2020.

\bibitem{Cybenko1989}
G. Cybenko,
Approximation by superpositions of a sigmoidal function,
\textit{Mathematics of Control, Signals and Systems} \textbf{2} (1989), 303--314.

\bibitem{Folland1999}
G.~B. Folland,
\textit{Real Analysis: Modern Techniques and Their Applications}, 2nd ed.,
Wiley, New York, 1999.

\bibitem{Hornik1989}
K. Hornik, M. Stinchcombe, and H. White,
Multilayer feedforward networks are universal approximators,
\textit{Neural Networks} \textbf{2} (1989), no.~5, 359--366.

\bibitem{Hornik1991}
K. Hornik,
Approximation capabilities of multilayer feedforward networks,
\textit{Neural Networks} \textbf{4} (1991), no.~2, 251--257.


\bibitem{Ismailov2024}
V.~E. Ismailov,
Universal approximation theorem for neural networks with inputs from a topological vector space,
\textit{Information Processing Letters} \textbf{193} (2026), Article 106623.


\bibitem{IzukiSawano2025}
M. Izuki, T. Noi, Y. Sawano, and H. Tanaka,
Some density theorems in neural network with variable exponent,
\textit{Mediterranean Journal of Mathematics} \textbf{22} (2025), Article 180.


\bibitem{Leshno1993}
M. Leshno, V.~Ya. Lin, A. Pinkus, and S. Schocken,
Multilayer feedforward networks with a nonpolynomial activation function can approximate any function,
\textit{Neural Networks} \textbf{6} (1993), no.~6, 861--867.

\bibitem{ParkSandberg1993}
J. Park and I.~W. Sandberg,
Approximation and radial-basis-function networks,
\textit{Neural Computation} \textbf{5} (1993), no.~2, 305--316.

\bibitem{Pinkus1999}
A. Pinkus,
Approximation theory of the MLP model in neural networks,
\textit{Acta Numerica} \textbf{8} (1999), 143--195.

\bibitem{Saini2026}
S. Saini,
A universal approximation theorem for neural networks with outputs in locally convex spaces,
arXiv:2603.07242, 2026.


\bibitem{Munkres2000}
J.~R. Munkres,
\textit{Topology}, 2nd ed.,
Prentice Hall, Upper Saddle River, NJ, 2000.

\bibitem{Rudin1976}
W. Rudin,
\textit{Principles of Mathematical Analysis}, 3rd ed.,
McGraw--Hill, New York, 1976.

\bibitem{Rudin1991}
W. Rudin,
\textit{Functional Analysis}, 2nd ed.,
McGraw--Hill, New York, 1991.

\end{thebibliography}
\end{document}